\newtheorem*{rep@theorem}{\rep@title}
\newcommand{\newreptheorem}[2]{%
\newenvironment{rep#1}[1]{%
 \def\rep@title{#2~\ref{##1}}%
 \begin{rep@theorem}}%
 {\end{rep@theorem}}}
\theoremstyle{plain}
\newtheorem*{thm*}{Theorem}
\newtheorem{thm}{Theorem}[section]
\newtheorem{cor}[thm]{Corollary}
\newtheorem{lem}[thm]{Lemma}
\newtheorem*{lem*}{Lemma}
\newtheorem{prp}[thm]{Proposition}
\theoremstyle{definition}
\newtheorem{dfn}[thm]{Definition}
 \newcommand{\R}{\mathbb{R}}
\newcommand{\vanish}[1]{}
\def\({\left(}
\def\){\right)}
\def\no={\,{\,|\!\!\!\!\!=\,\,}}
\def\no={\,{\,|\!\!\!\!\!=\,\,}}
\def\conv{\text{\rm{conv}}}
\newcommand{\xqedhere}[2]{%
  \rlap{\hbox to#1{\hfil\llap{\ensuremath{#2}}}}}
\newcommand\note[1]{\textbf{\color{red}}}
\newcommand\Defn[1]{\textbf{\emph{#1}}}
\newcommand{\cm}[1]{}
\newcommand\mr[1]{\mathrm{#1}}
\renewcommand\emptyset{\varnothing}
\DeclareMathOperator{\lk}{Lk}
\DeclareMathOperator{\st}{St}
\DeclareMathOperator{\Cone}{Cone}
\DeclareMathOperator{\Star}{St}
\title{Normal crossing immersions, cobordisms and flips}
\author{Karim Adiprasito}
\author{Gaku Liu}
\address{Sorbonne Université and Université Paris Cité, CNRS, IMJ-PRG, F-75005 Paris, France}
\email{karim.adiprasito@imj-prg.fr}
\address{Department of Mathematics, University of Washington, Seattle, USA}
\email{gakuliu@uw.edu}
\thanks{K.A.~is supported by the
	Centre National de Recherche Scientifique and  Horizon Europe ERC Grant number: 101045750 / Project acronym: HodgeGeoComb, G.L. was supported by NSF grant 1440140.}
\date{\today}
\begin{document}
\begin{abstract}
We study various analogues of theorems from PL topology for cubical complexes. In particular, we characterize when two PL homeomorphic cubulations are equivalent by Pachner moves by showing the question to be equivalent to the existence of cobordisms between generic immersions of hypersurfaces. This solves a question and conjecture of Habegger and Funar. 
\end{abstract}

\maketitle

\section{Introduction}
Pachner's influential theorem proves that every PL homeomorphim of triangulated manifolds can be written as a combination of local moves, the so-called Pachner moves (or bistellar moves). Geometrically, they correspond to exchanging, inside the given manifold, one part of the boundary of a simplex by the complementary part. For instance, in a two-dimensional manifold, a triangle can be replaced by three triangles with a common vertex, and two triangles sharing an edge can be replaced by two different triangles sharing the edge connecting the vertices opposite the original edge. 

Cubical complexes, popularized for their connection to low-dimensional topology and geometric group theory (see for instance \cite{GromovHG}), have analogous moves called \emph{cubical Pachner moves}. However, the situation is not as simple as the simplicial case. Indeed, a cubical Pachner move can never change, for instance, the parity of the number of facets. But cubical polytopes, for instance in dimension 3, can have odd numbers of facets as well as even; see \cite{SZ} for a few particularly notorious constructions.

Hence, there are at least two classes of cubical 2-spheres that can never be connected by cubical Pachner moves. Indeed, Funar \cite{Funar} provided a conjecture for a complete characterization for the cubical case that revealed the problem's depth
depth: Every cubical manifold of dimension $d$ has associated an immersed normal hypersurface. To construct this, consider an edge of the cubical complex; transversal to it, draw a $(d-1)$-dimensional disk. Continue drawing through adjacent edges, that is, edges that are in a common square, but have no vertex in common. Repeating this for every edge yields the desired hypersurface. We refer to \cite{Funar} for more on these notions.

Note that the union of these normal hypersurfaces form precisely the codimension one-skeleton of the dual cell decomposition. Notice further that every self-intersection of that hypersurface is normal crossing; the intersections are transversal.

We call two normal crossing hypersurfaces \emph{cobordant} if they are cobordant as normal crossing hypersurfaces: If $H$, $H'$ are normal crossing hypersurfaces in a manifold $M$, then they are cobordant if there is a normal crossing hypersurface in $M\times [0,1]$ that restricts to $H$ and $H'$ in the two boundary components.

Funar conjectured, and we prove:

\begin{thm}\label{thm:main}
For two PL cubulations $X_0$, $X_1$ of the same manifold $M$, the following three conditions are equivalent:\
\begin{compactenum}[(1)]
\item The normal surfaces of $X_0$, $X_1$ of a manifold $M$ are cobordant as generic immersions.
\item There is a PL cubulation of $M\times [0,1]$ such that $M\times \{0\} \cong X_0$, $M\times \{1\} \cong X_1$.
\item $X_0$ and $X_1$ are related by cubical Pachner moves.
\end{compactenum}
\end{thm}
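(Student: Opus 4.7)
I would prove Theorem~\ref{thm:main} cyclically, via $(3) \Rightarrow (2) \Rightarrow (1) \Rightarrow (3)$, since each ``easy'' direction isolates the novel ingredient needed for the next.

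\emph{$(3) \Rightarrow (2)$.} This is local. A cubical Pachner move exchanges, inside $X_0$, one part of the boundary of a cube $Q$ for its complementary part. One realises the move as a thin transition slab: take $Q$ itself as a cobordism region in $M \times [0,1]$, sandwiched between cylinders of the form $Y \times [0,\tfrac12]$ and $Y' \times [\tfrac12,1]$, where $Y$, $Y'$ are the cubulations that agree with $X_0$, respectively $X_1$, away from the move. Stacking these slabs for a sequence of moves yields a cubulation of $M \times [0,1]$ restricting to $X_0$ and $X_1$.

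\emph{$(2) \Rightarrow (1)$.} If $W$ cubulates $M \times [0,1]$ with $W|_{M \times \{i\}} = X_i$, then by construction the associated normal hypersurface $N(W)$ is a properly immersed normal-crossing hypersurface in $M \times [0,1]$ whose boundary restricts to the normal hypersurface $N(X_i)$ on $M \times \{i\}$. After a small perturbation, if necessary, to ensure the immersion is generic, this is precisely a cobordism of generic normal-crossing immersions between $N(X_0)$ and $N(X_1)$.

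\emph{$(1) \Rightarrow (3)$.} This is the substantive direction, which I would split in two. First, promote the given cobordism $H$ in $M \times [0,1]$ to an actual cubulation $W$ of $M \times [0,1]$ whose normal hypersurface is isotopic to $H$ and whose restrictions to the boundary are $X_0$ and $X_1$. The idea is to build $W$ first in a regular neighbourhood of $H$: the combinatorics of the normal crossing determines, at each multiple point, a local ``dual cube'' incident to the sheets passing through it, so that locally the normal hypersurface of the resulting cubulation recovers $H$; then extend to the complementary PL balls using PL cubulation theory, matching the prescribed boundary cubulations $X_0$, $X_1$ on $M \times \{0,1\}$. Second, convert the cubulation $W$ into a sequence of Pachner moves by sweeping the $[0,1]$ coordinate: after a generic perturbation of the height function so that cubes appear or disappear one at a time, each critical level produces exactly one cubical Pachner move on the cross-section, in the cubical analogue of the classical shelling proof of Pachner's theorem for triangulations.

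\emph{Main obstacle.} The crux is the first half of $(1) \Rightarrow (3)$: promoting a cobordism of normal-crossing hypersurfaces to a full cubulation of $M \times [0,1]$ matching $X_0$, $X_1$ on the ends. Passing from a cubulation to its normal hypersurface is many-to-one and only locally determined by crossing data, so the reconstruction requires a careful local-to-global argument; one must choose compatible local cube structures along $H$ and then cubulate the complementary regions so that the pieces agree with the prescribed cubulations on the boundary, which is where the rigidity of the PL category and of the boundary data has to be exploited.
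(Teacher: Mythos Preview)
Your cycle $(3)\Rightarrow(2)\Rightarrow(1)\Rightarrow(3)$ is fine, and your easy implications are correct (indeed $(3)\Rightarrow(1)$ and $(1)\Rightarrow(2)$ were already done by Funar, so your $(3)\Rightarrow(2)$ and $(2)\Rightarrow(1)$ are equally straightforward). The real content, as in the paper, is hidden in your second half of $(1)\Rightarrow(3)$: converting a PL cubulation $W$ of $M\times[0,1]$ into a sequence of cubical Pachner moves. You identify the \emph{first} half---rebuilding a cubulation from the normal-crossing cobordism---as the main obstacle, but that is precisely Funar's $(1)\Rightarrow(2)$ and is already in the literature; the genuine obstacle is the step you dismiss in one sentence.

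Your proposed mechanism for that step---``after a generic perturbation of the height function so that cubes appear or disappear one at a time, each critical level produces exactly one cubical Pachner move''---is a real gap. What you are asserting is that $(W,\,W\cap(M\times\{0\}))$ is shellable, since a height sweep in which each level removes one top cube along a ball in its boundary is exactly a relative shelling. But an arbitrary PL cubulation of $M\times[0,1]$ has no reason to be shellable relative to one end: there exist nonshellable PL balls, and a generic linear functional on such a ball does not repair this. The simplicial Pachner theorem does not work by sweeping an \emph{arbitrary} triangulated cobordism either; one first produces a specific shellable cobordism (via common stellar refinements, or Pachner's bistellar-manifold construction). The paper's entire contribution is to manufacture shellability here: it performs iterated \emph{cubical stellar subdivisions of interior faces only} (so the boundary cubulations $X_0$, $X_1$ are untouched) until (i) all links become shellable and (ii) $W$ collapses to $M\times\{0\}$, invoking nontrivial results of Adiprasito--Benedetti for both steps; a Whitehead-type argument then upgrades the collapse to a relative shelling of a further cubical derived subdivision, and Lemma~\ref{lem:cobtoshell} converts the shelling to Pachner moves. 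Your sketch is missing exactly this machinery, and without it the height-function argument does not go through.
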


This resolves a problem of Habegger and Funar \cite{Funar}, see the remark on top of his third page. Habegger in fact assumed that only a PL homeomorphism is necessary for two cubulations to be related through cubical Pachner moves; it was Funar that recognized the importance of normal crossing cobordisms for the problem, and in particular also computed these cobordism groups for spheres \cite{FG}. In fact, Funar proved almost all implications, except one: that (2) implies (3). For surfaces and $1$-dimensional manifolds, the problems were solved by Funar \cite{FunarS} and independently sketched by Thurston \cite{Thurston}.

\section{Preliminaries}

\subsubsection{Basic definitions}

A \emph{polyhedral complex} is a complex of polytopes where the intersection of any two polytopes is a union of faces of each polytope, and where we do not allow identification of faces of the same polytope. A \emph{simplicial complex} and a \emph{cubical complex} are complexes whose polytopes are all simplices and cubes, respectively. These are also called \emph{triangulations} and \emph{cubulations}, respectively. Given a polyhedral complex $A$ and a subcomplex $B$ of $A$, we let $A \setminus B$ be the complex generated by the facets of $A$ that are not in $B$. Given a complex $A$ and a face $\tau$ of $A$, we let $A - \tau$ be the complex consisting of faces of $A$ that do not contain $\tau$.

Given a polyhedral complex $A$ and a face $\tau$ of $A$, the \emph{star} $\Star_\tau A$ of $\tau$ in $A$ is the subcomplex generated by all faces of $A$ containing $\tau$. A disk is a PL disk if it admits a PL homeomorphism to the simplex. A polyhedral manifold is a polyhedral complex homeomorphic to a manifold, and a PL manifold is a polyhedral manifold in which the star of any vertex is a PL ball. Hence a PL triangulation resp.\ PL cubulation is a triangulation resp.\ cubulation in which every star is a PL ball.

The \emph{link} $\lk_\tau A$ of a face $\tau$ in $A$ is, in terms of the poset, the principal filter of $\tau$ in $A$. Geometrically, it can be understood as follows: If $\tau$ is a vertex, then its link is the intersection of $A$ with a sufficiently small metric sphere with center $\tau$. Here, a metric sphere is the boundary of a metric ball.
Provided this metric sphere is contained entirely in the star of $\tau$, this yields a spherical polyhedral complex whose face poset is the filter discussed above. 

The link of a general face $\tau$ is defined iteratively, by considering one of its vertices $v$, and defining $\lk_\tau A$ to be the link of $\lk_v \tau$ in $\lk_v A$. The link of the empty set in a complex is naturally the complex itself.

\subsubsection{Cubical Pachner moves} First, it is useful to have different moves available. Let us first recall the definitions of Pachner moves.

A simplicial Pachner move in a $d$-dimensional simplicial complex $S$ picks a $d$-dimensional subcomplex of $S$ isomorphic to a subcomplex of the boundary of the $(d+1)$-simplex, and replaces it with the complementary subcomplex of that simplex. Here is the cubical version.

\begin{dfn}[Cubical Pachner moves] 
If $X$ is a cubical manifold, that is, a cubulation of a manifold, and $C$ is a cube of one dimension higher, then a cubical Pachner move consists of removing a pure, full dimensional, contractible subcomplex of $X$ isomorphic to a subcomplex of $\partial C$ and replacing it with the complement.
\end{dfn}

A good way to think about Pachner moves is in terms of cobordisms and shellings. A \emph{relative complex} $P = (A,B)$ is a pair of polyhedral complexes where $B \subseteq A$. The \emph{faces} of $P$ are the faces of $A$ that are not faces of $B$. Geometrically, we identify $P$ with $\lvert\lvert A \rvert\rvert$. 

Consider a relative polyhedral complex $P=(A,B)$ and a facet $F$ of $P$. We say that $P$ \emph{shells} to $P\setminus F := ((A \setminus F) \cup B, B)$ if $F\cap ((A \setminus F) \cup B)$ is shellable of dimension $\dim F -1$ and $P\setminus F$ is of the same dimension as $P$ or has no faces. 
 
A relative complex $(A,B)$ is shellable if iterated shelling steps reduce it to $(B,B)$. A complex $A$ is shellable if $(A,\emptyset)$ is shellable.
 
With this, we have:

\begin{lem}\label{lem:cobtoshell} If a triangulation/cubulation of $(M\times [0,1],M\times\{0\})$ is shellable then $M \times \{0\}$ and $M \times \{1\}$ are related through Pachner moves. 

Conversely, if two cubulations or triangulations are related by (cubical) Pachner moves, then they related by a shellable cobordism. (That is, there is a triangulation/cubulation of $M \times [0,1]$ where $M \times \{0\}$ and $M \times \{1\}$ are the given triangulations/cubulation and $(M\times [0,1],M\times\{0\})$ is shellable.)
\end{lem}

\begin{proof}
Every shelling step exactly corresponds to a Pachner move: Consider the change in $M \times \{1\}$ in such a step. Observe then that the part of the removed polytope (cube or simplex) in the shelling step that intersects $M \times \{1\}$ is replaced by the remaining facets of the boundary, as desired.
\end{proof}

Concerning regular subdivisions of convex disks, that is, regions of linearity of a convex function, the following is useful to keep in mind:

\begin{prp}[\cite{BruggesserMani}]\label{prp:BM}
A regular subdivision of a convex disk $D$ in $\R^d$ is shellable. If, moreover, $v$ is any point outside of $D$, and $M$ is the part of the boundary of $D$ illuminated by $v$, that is, those facets that can be connected to $v$ with a line segment that does not intersect $D$ in the interior, then $(D,M)$ is shellable.
\end{prp}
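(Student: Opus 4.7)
The plan is to reduce the statement to the classical Bruggesser--Mani line shelling of the boundary of a convex polytope, after lifting the regular subdivision by one dimension.

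First, encode the regular subdivision as a lift. By definition, a regular subdivision of $D \subset \R^d$ arises from a convex piecewise-linear height function $\omega : D \to \R$. Let $Q \subset \R^{d+1}$ be the convex polytope whose projection to the first $d$ coordinates is $D$, whose lower envelope is the graph of $\omega$, and whose upper envelope is a single affine facet (cap $Q$ from above by $\{x_{d+1}=C\}$ for $C$ large). The lower facets of $\partial Q$ are in bijection with the maximal cells of the regular subdivision of $D$, and the side facets of $\partial Q$ are prisms over the boundary facets of $D$.

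Next, I would apply the Bruggesser--Mani line shelling to $\partial Q$. Pick a generic line $\ell$ through an interior point of $Q$ and walk along $\ell$ in one direction, shelling each facet of $\partial Q$ at the moment its supporting hyperplane is crossed; then traverse $\infty$ and continue from the other side until all facets have been shelled. This gives a shelling of $\partial Q$. If $\ell$ is chosen to exit $Q$ first through the top facet, the shelling of $\partial Q$ restricted to the lower facets occurs as a contiguous block, and projecting back to $\R^d$ yields a shelling of the regular subdivision of $D$. This already proves the first sentence of the proposition.

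To obtain the relative statement with respect to an exterior point $v$, lift $v$ to a point $\widehat{v} \in \R^{d+1}$ lying far below the graph of $\omega$ and far in the direction of $v$, and take the line $\ell$ through $\widehat{v}$ and a fixed interior basepoint of $Q$. By convexity, the side facets of $\partial Q$ whose supporting hyperplanes $\ell$ crosses on the initial leg are precisely the prisms over the facets of $\partial D$ illuminated by $v$, i.e.\ over $M$. A small perturbation of $\omega$ and of $\ell$ (standard genericity) ensures that along the shelling, each newly added facet meets the union of previously shelled facets in a pure codimension-one shellable complex, which is the Bruggesser--Mani condition. One then reads off a shelling of the relative complex $(D, M)$ by taking the induced order on lower facets of $Q$ that remain after the prisms over $M$ have been shelled away.

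The main obstacle is Step 3: verifying that the genericity of $\ell$ and of the lift really forces the shelling to realise $M$ as an initial segment in the required sense, rather than merely ordering facets correctly in the abstract. This amounts to checking that the illumination cone of $\widehat{v}$ in $\R^{d+1}$ projects down to the illumination cone of $v$ in $\R^d$, together with a limit argument as $\widehat{v}$ recedes to infinity below $v$. Once this geometric calibration is in place, the rest is the usual Bruggesser--Mani verification.
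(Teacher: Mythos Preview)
The paper does not give a proof of this proposition; it is stated with a citation to Bruggesser--Mani and nothing more (note too that the second sentence of the statement is truncated in the paper---the intended conclusion is presumably that the pair $(D,M)$ is shellable). Your lift-to-$\R^{d+1}$ followed by a line shelling of $\partial Q$ is exactly the classical Bruggesser--Mani argument, so on the first assertion there is nothing to compare and your proof is correct.

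On the relative part, one geometric remark sharpening the obstacle you already flag. Placing $\widehat v$ ``far below the graph of $\omega$ and far in the direction of $v$'' does not cleanly isolate the prisms over $M$ as an initial segment. Visibility of a vertical side prism from $\widehat v=(v',h)$ depends only on the projection $v'$, so to see exactly the prisms over $M$ you need $v'$ on the same side as $v$ of every boundary hyperplane of $D$; but if you also take $h$ very negative, $\widehat v$ sees every lower facet as well, and the order in which the line meets side versus lower supporting hyperplanes then depends on the slope of the line rather than on visibility alone. A cleaner variant is to drop the cap entirely, view the lifted subdivision as a $d$-ball embedded in $\R^{d+1}$, and sweep it by a generic hyperplane moving in from the direction of $v$: the boundary facets in $M$ are then exactly those swept first, and the induced ordering on the $d$-cells gives the relative shelling of $(D,M)$ directly. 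This sidesteps the calibration issue you identify in your final paragraph.
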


\subsubsection{Cubical stellar and cubical derived subdivisions}

We next describe the cubical analogue of stellar subdivisions and derived subdivisions. Recall that a stellar subdivision of a polyhedral complex $S$ picks a face $\tau$ of $S$ and considers the complex $S-\tau$. It then glues $\Cone(\Star_S \tau - \tau)$ to $S-\tau$ along $\Star_S \tau - \tau$. In geometric situations the conepoint is often placed inside $\tau$. Here is the cubical variant.

\begin{dfn}[Cubical stellar subdivision]
Let $C$ be a polyhedral complex in $\R^d$, and let $\tau$ be any face of $C$. Let $x_\tau$ denote a point anywhere in the relative interior of $\tau$, and let $\lambda$ be any number in the interval $(0,1)$. Define
\begin{align*}
\text{c-st}(\tau,C)\ :=\ & (C-\tau) \cup \\
&\{\conv (\sigma\cup (\lambda\sigma+(1-\lambda)x_\tau)): \sigma \in \mr{St}_\tau C -\tau \} \cup \\
& (\lambda \mr{St}_\tau C + (1-\lambda) x_\tau).
\end{align*}
The complex $\text{c-st}(\tau,C)$ is the \Defn{cubical stellar subdivision}, or \Defn{c-stellar subdivision}, of $C$ at $\tau$.
\end{dfn}

\begin{figure}[hbt]
	\includegraphics[width=13.5cm]{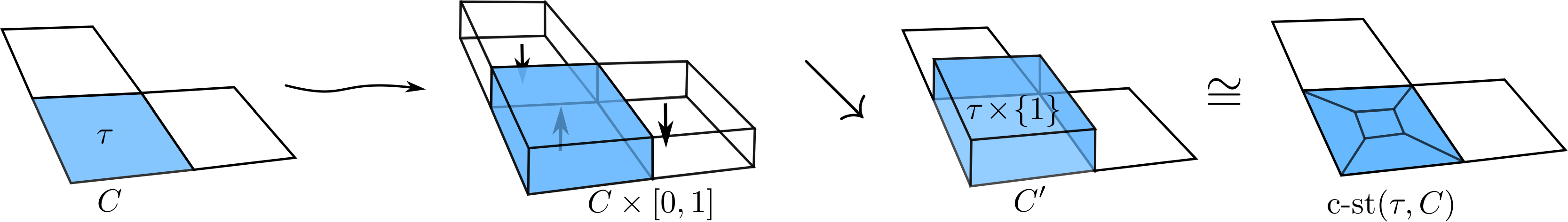}
	\caption{A cubical stellar subdivision of a complex $C$, seen as a subcomplex of the product of $C$ with an interval.}
	\label{f:over}
\end{figure}

A useful way to think about a cubical stellar subdivision is to think of it as a stellar subdivision, that is, the removal of $\tau$ from $C$ and coning over the boundary of the hole left over, followed by "\emph{cutting off}" the conepoint. In particular, if the link of $\tau$ is regular, then we can indeed achieve this visually by introducing new linear constraints at the apex vertex of the cone.

Finally, we look at cubical derived subdivisions. In the simplicial case, these derived subdivisions arise by performing stellar subdivisions at the faces in order of decreasing dimension. The analogue here is:

\begin{dfn}[Cubical derived subdivision]
Let $C$ denote any polytopal $d$-complex, with faces ordered by decreasing dimension.
A \Defn{cubical derived subdivision}, or \Defn{c-derived subdivision}, is any subdivision $C$ obtained by first c-stellar subdividing all $d$-faces of $C$, then c-stellar subdividing the resulting complex at all the original $(d-1)$-faces of $C$, then all original $(d-2)$-faces, and so on up to the faces of dimension~$1$.
\end{dfn}

Again, in the way they were introduced, it is natural to think of stellar subdivisions as cobordisms, obtained by attaching $\mr{St}_\tau C \times [0,1]$ to the complex in question. (Cubical) stellar subdivisions have several nice properties; for example, they preserve shellability.

\begin{lem}\label{lem:preserve}
Consider a shellable pair $(X,Y)$ that is relatively shellable. If $(X',Y')$ is a stellar subdivision of this pair, then it is shellable as well.
\end{lem}

\begin{proof}
	Restricted to any facet of $X$, the stellar subdivision is regular, and hence shellability follows from Proposition~\ref{prp:BM}.	
\end{proof}

 More importantly, it is useful to note the following corollary of the main theorem.

\begin{cor}
Every cubical stellar subdivision of a PL cubical manifold $X$ can be achieved by cubical Pachner moves.
\end{cor}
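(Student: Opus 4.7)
The plan is to invoke Theorem~\ref{thm:main}, reducing the corollary to the existence of a cubical cobordism. The implication (2)$\Rightarrow$(3) from that theorem asserts that if two cubulations of a manifold bound a PL cubulation of a cylinder, then they are related by cubical Pachner moves. Therefore it suffices to exhibit a PL cubulation $Y$ of $X \times [0,1]$ whose boundary components recover $X$ at $t=0$ and $\mr{cst}(\tau, X)$ at $t=1$.

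To construct $Y$, I would proceed locally around $\tau$. Outside $\Star_\tau X \times [0,1]$, the c-stellar subdivision agrees with $X$, so I use the trivial product cubulation $F \times [0,1]$ for each cube $F$ not in $\Star_\tau X$. Inside the cylinder $\Star_\tau X \times [0,1]$, I build a cubical cobordism matching $\Star_\tau X$ on the bottom and $\mr{cst}(\tau,X)\cap \Star_\tau X$ on the top. As the remark preceding the corollary hints, this is the natural cobordism associated with a stellar subdivision, obtained by attaching $\Star_\tau X \times [0,1]$ to $X$. Explicitly, each prism $\conv(\sigma \cup (\lambda\sigma + (1-\lambda)x_\tau))$ that appears on the top (for $\sigma \in \Star_\tau X - \tau$), together with the scaled star $\lambda\, \Star_\tau X + (1-\lambda)x_\tau$, lifts to a cube of one higher dimension in the cylinder, and these lifts assemble into a cubulation whose bottom face is exactly $\Star_\tau X$. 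Along $\partial\, \Star_\tau X \times [0,1]$ this local cubulation meets the outside product structure by construction, since all the new vertices introduced by the c-stellar subdivision lie strictly inside $\Star_\tau X$.

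The main obstacle I anticipate is the explicit verification that the local cubulation of $\Star_\tau X \times [0,1]$ is a bona fide cubical complex: cubes intersect in common faces and cover the cylinder. This requires care in tracking the new vertices introduced at $t=1$, organizing the cylinder into a suitable layering (e.g.\ splitting at an intermediate height so that the lower layer is a trivial product and the upper layer carries the c-stellar cobordism), and checking consistency across cubes of $\Star_\tau X$ that share a face containing $\tau$. This bookkeeping is combinatorial and follows from the affine-linear nature of the c-stellar construction and the fact that $x_\tau$ lies in the relative interior of $\tau$, so that the scaling operations behave consistently on common faces. Once the cubulation of $X\times[0,1]$ is in hand, Theorem~\ref{thm:main} immediately furnishes the desired sequence of cubical Pachner moves.
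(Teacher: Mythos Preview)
Your approach is essentially the same as the paper's: construct a cubical cobordism between $X$ and $\mr{cst}(\tau,X)$ using $\Star_\tau X\times[0,1]$, then invoke Theorem~\ref{thm:main}. The paper's proof is a one-liner: attach $\Star_\tau X\times[0,1]$ to $X$ along $\Star_\tau X\times\{0\}$ and observe that this is the required cobordism.

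The one place where you diverge, and where you should be more careful, is the explicit local cubulation. Your description of ``lifting'' each top cell of $\mr{cst}(\tau,\Star_\tau X)$ to a $(d+1)$-cube whose bottoms assemble into $\Star_\tau X$ does not work as stated: the scaled copy and the prisms have different footprints, and there is no obvious way to make their lifts meet $\Star_\tau X\times\{0\}$ in the original cubes while simultaneously matching the product structure on $\partial\Star_\tau X\times[0,1]$. The paper sidesteps this entirely by not trying to cubulate $\Star_\tau X\times[0,1]$ with the c-stellar structure on the top face. Instead it uses the plain product cubulation of $\Star_\tau X\times[0,1]$ and reads the c-stellar structure off a \emph{different} part of its boundary: namely $(\Star_\tau X\times\{1\})\cup(\partial\Star_\tau X\times[0,1])$, the top together with the sides, is combinatorially exactly $\mr{cst}(\tau,\Star_\tau X)$. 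So the cobordism is $X$ with the product block glued on as a bump (or, if you want a genuine cylinder, $X\times[0,1]$ with the bump attached to $X\times\{1\}$), and no new cubulation needs to be built. Your ``layering'' remark is heading toward this, but you should drop the attempt to force the c-stellar pattern onto the top face of a straight cylinder.
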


\begin{proof}
By attaching $\mr{St}_F X \times [0,1]$ to $X$ with the identification $\mr{St}_F X \times \{0\} \sim \mr{St}_F X$, we obtain a cubical cobordism of $X$ to the cubical stellar subdivision of $X$ at $F$. This gives the desired by the main theorem.
\end{proof}

Let us observe a weaker version, which we can prove immediately.

\begin{prp}\label{lem:stellar}
Every cubical stellar subdivision of a PL cubical manifold $X$ can be achieved by cubical Pachner moves provided, assuming the link of the subdivided face is shellable.
\end{prp}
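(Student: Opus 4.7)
The plan is to apply Lemma~\ref{lem:cobtoshell}: it suffices to produce a shellable cubical cobordism with $X$ on the bottom and $\mr{cst}(\tau,X)$ on the top. Outside the star, we set $Y_{\mathrm{out}} := (X \setminus \Star_\tau X) \times [0,1]$ with the trivial product cubulation; this is consistent with both boundary conditions because the c-stellar subdivision leaves every face not containing $\tau$ unchanged, and in particular agrees with $X$ on all of $\partial \Star_\tau X$. The remaining construction is a cubulation $Y_{\mathrm{in}}$ of $\Star_\tau X \times [0,1]$ whose bottom is $\Star_\tau X$, top is $\mr{cst}(\tau,\Star_\tau X)$, and whose sides $\partial \Star_\tau X \times [0,1]$ match the trivial product.

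To build $Y_{\mathrm{in}}$, I would model it on the geometric picture of the c-stellar subdivision: place the scaled star $\lambda \Star_\tau X + (1-\lambda)x_\tau$ as a subcomplex on the top face, and fill the collar between it and $\partial \Star_\tau X$ by the prism cubes $P_\sigma := \conv(\sigma \cup (\lambda\sigma + (1-\lambda)x_\tau))$ for $\sigma \in \Star_\tau X - \tau$. These top-layer pieces are thickened vertically inside the slab: each $P_\sigma$ and each cube of the scaled star is extended downward as a product $(\cdot) \times [h,1]$, with the heights $h$ chosen consistently so that the resulting cubes meet face-to-face; the remaining space below is cubulated by products $\sigma \times [0,h_\sigma]$ that match $\Star_\tau X$ on the bottom.

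For shellability, I would use the shelling hypothesis on $L := \lk_\tau X$. A shelling $F_1,\ldots,F_k$ of $L$ induces an ordering $C_1,\ldots,C_k$ of the facets of $\Star_\tau X$. Shell $Y := Y_{\mathrm{in}} \cup Y_{\mathrm{out}}$ in this order, removing at step $i$ all cubes of $Y$ associated with $C_i$ (the upper-layer subdivision cubes first, then the corresponding lower product cubes). By Proposition~\ref{prp:BM} (Bruggesser--Mani), each such step is a valid shelling step: the removed portion of the current boundary is a regular subdivision of a convex region and so is shellable. After all $k$ steps, what remains is a product over $\tau$ itself, which is trivially shellable. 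Lemma~\ref{lem:cobtoshell} then concludes the proof.

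The main obstacle is the careful combinatorial verification that the proposed cubulation $Y_{\mathrm{in}}$ is a valid cubical complex --- that all cubes meet face-to-face, that its restriction to the top is exactly $\mr{cst}(\tau,\Star_\tau X)$, and that the product cubulation on the sides is honored --- and that the shelling order descending from $L$'s shelling truly extends to a global shelling of $Y$ with each intermediate complex remaining a valid relative cubical complex. The shellability hypothesis on $L$ is used precisely to supply the combinatorial backbone along which this global shelling proceeds; without it the local Bruggesser--Mani argument still applies to each individual piece but there is no way to sequence them compatibly.
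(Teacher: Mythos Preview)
Your overall strategy---build a shellable cubical cobordism and invoke Lemma~\ref{lem:cobtoshell}---is exactly the paper's. However, you are working much harder than necessary, and in doing so you introduce a gap.

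The paper's proof is one line: $\Star_\tau X \times [0,1]$ with the \emph{plain product cubulation} is shellable relative to $\Star_\tau X \times \{0\}$, because $\lk_\tau X$ (hence $\Star_\tau X$) is shellable. The point you missed, stated in the paper just before the Corollary, is that attaching $\Star_\tau X \times [0,1]$ to $X$ along $\Star_\tau X \times \{0\}$ already produces a cobordism whose outer boundary is combinatorially $\mr{cst}(\tau,X)$: the top $\Star_\tau X \times \{1\}$ plays the role of the scaled star, and the sides $(\Star_\tau X - \tau)\times[0,1]$ are exactly the collar prisms. There is no need for a global cylinder $M\times[0,1]$, no $Y_{\mathrm{out}}$, and no bespoke interior cubulation $Y_{\mathrm{in}}$.

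Your proposed $Y_{\mathrm{in}}$ does not obviously work as written. You thicken the c-stellar top downward as products over $[h,1]$ and fill below with products $C_i\times[0,h_\sigma]$ over the \emph{original} star cells. At the interface height the upper layer carries the subdivided pattern (scaled star plus collar prisms) while the lower layer carries the undivided pattern $\Star_\tau X$; these do not meet face-to-face, regardless of how the heights $h_\sigma$ are chosen. You correctly flag this as the ``main obstacle,'' but it is in fact the whole difficulty, and it disappears once you realize the product cubulation already does the job.
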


\begin{proof}
$\mr{St}_F X \times [0,1]$ is shellable relative to $\mr{St}_F X \times \{0\}$ by assumption.
\end{proof}

Note that the converse does not hold: not every cubical Pachner move is obtainable by cubical stellar moves. Specifically, note that cubical stellar moves are topologically ``boring'' with respect to the associated normal hypersurfaces; they only introduce null-homotopic spheres as hypersurfaces, or remove them. But general cubical Pachner moves can change the topology or intersection patterns of the immersed surfaces. For example, a loop in $S^2$ with two self-intersection is normal crossing equivalent to the loop with no self intersection, but this cannot be obtained by stellar moves only. 
%

This is contrary to the case of simplicial complexes, where simplicial Pachner moves and stellar subdivisions and inverses are of the same strength, at least for PL manifolds: two PL triangulations of the same manifold are related by Pachner moves if and only if they are related by stellar moves. Beyond PL manifolds, things become more complicated, and stellar subdivisions and inverses can relate triangulations that are unrelated by Pachner moves.

\subsubsection{From cubical-stellar to stellar and back: the confinement map}\label{ssc:confinement} 

Let us compare cubical stellar and stellar subdivision for the briefest moment: Recall, the cubical stellar subdivision of a complex $C$ at a face $\tau$ is
\begin{align*}
	\text{c-st}(\tau,C)\ :=\ & (C-\tau) \cup \\
	&\{\conv (\sigma\cup (\lambda\sigma+(1-\lambda)x_\tau)): \sigma \in \mr{St}_\tau C -\tau \} \cup \\
	& (\lambda \mr{St}_\tau C + (1-\lambda) x_\tau).
\end{align*}

Compared to that, the stellar subdivision is
\begin{align*}
	\text{st}(\tau,C)\ :=\ & (C-\tau) \cup \\
	&\{\conv (\sigma\cup x_\tau)): \sigma \in \mr{St}_\tau C -\tau \}
\end{align*}

In other words, the stellar subdivision is the limit of the cubical stellar subdivision when $\lambda\rightarrow 0$.

In particular, using this contraction we obtain a polyhedral map, called \Defn{confinement map}, 
	\[\varphi_\tau:\text{c-st}(\tau,C)\ \longrightarrow\ \text{st}(\tau,C)\]
that is, a map that sends polyhedra to polyhedra and in particular induces a map of posets. The following is clear: Given a polyhedral complex $I$, let $I^{>0}$ denote the filter generated by all faces of positive dimension.

\begin{lem}\label{lem:confinement}
The confinement map is injective on $\text{st}(\tau,C)^{>0}$, that is, on the order filter and its preimage, the map is one to one as a map of posets. 

Moreover, the preimage of a vertex is a vertex, unless the vertex is $x_\tau$, in which case it is $\mr{St}_\tau C$.
\end{lem}

In other words, the source of non-injectiveness is the 0-skeleton of $\text{st}(\tau,C)$, that is, the vertices. This lemma allows us to associate to any iterated stellar subvision of $C$ a canonical cubical stellar subdivision. If, say, $C$ is stellarly subdivided at a face $\tau$, then there is an associated cubical stellar subdivision of $C$ carried out in $\tau$.

Now if we consider a further stellar subdivision of the resulting complex $C'$, then the preimage of the subdivided face under the confinement map is a face (by Lemma~\ref{lem:confinement}) that is we can analogously perform a cubical stellar subvision on. 

Hence, to an iterated simplicial stellar subdivision we have an associated cubical stellar subdivision, and an associated confiment map. Lemma~\ref{lem:confinement} still applies to this iterated confinement map: The map is injective on the order filter of faces of positive dimension.

\subsubsection{A non-useful subdivision}

There is another kind of subdivision in cubical manifolds. While simple, it is, from the view of cobordisms of normal crossing hypersurfaces, boring. But we record it here for curiosity's sake. Specifically, consider a cubulation $X$ of a $d$-manifold $M$. The normal crossing hypersurface of $X$ divides every $d$-cube of $M$ into $2^d$ smaller $d$-cubes. It divides every edge into two components, each of which can be cancelled out with each other in a cobordism. We obtain:

\begin{prp}
The hypersurface of the above subdivision is cobordant to the trivial hypersurface; the empty one.
\end{prp}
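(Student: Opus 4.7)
The plan is to construct an explicit normal-crossing cobordism from $H'$ to the empty hypersurface in $M \times [0,1]$. The guiding observation is that $H'$ is essentially the boundary of a tubular neighborhood of the original normal hypersurface $H$ of $X$, and such boundaries are null-cobordant in a canonical way.

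First I will analyze the local structure of $H'$. For each component $\Sigma$ of $H$, in any cube $C$ of $X$ where $\Sigma \cap C = \{x_i = 1/2\}$, the refinement produces two parallel sheets $\{x_i = 1/4\}$ and $\{x_i = 3/4\}$ in $H'$. These glue into two distinct components $\Sigma^-$ and $\Sigma^+$ of $H'$, because the two half-edges arising from a single edge of $X$ share their midpoint vertex and hence are not dual-adjacent, so they lie in distinct dual-adjacency classes. Globally, $\Sigma^- \cup \Sigma^+ = \partial N_\Sigma$, where $N_\Sigma \subset M$ is the regular neighborhood of $\Sigma$ given by gluing the central slabs $\{1/4 \le x_i \le 3/4\}$ across faces of $X$.

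Next I will build the cobordism one component at a time. Identifying $N_\Sigma$ locally with $\Sigma \times [-1,1]$ (or its $\Z/2$-twist when the normal bundle of $\Sigma$ in $M$ is non-orientable), I set
\[
W_\Sigma \ =\ \bigl\{(\sigma, u, t) \in \Sigma \times [-1,1] \times [0, 1/2] : u^2 + (2t)^2 = 1\bigr\},
\]
extended by the empty set for $t > 1/2$. Each $W_\Sigma$ is a smooth $d$-manifold whose only boundary, at $t = 0$, is $\Sigma^- \cup \Sigma^+$, and which has empty trace on $M \times \{1\}$. To preserve normal crossings I will stagger the cap times across components $\Sigma$ so that no two annihilation caps occur simultaneously, and take $W = \bigcup_\Sigma W_\Sigma$.

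The key verification is that $W$ has normal crossings in $M \times [0,1]$. At a point where components $\Sigma_{k_1}, \ldots, \Sigma_{k_r}$ of $H$ meet transversally, the local cubical coordinates realize $\Sigma_{k_j}^\pm$ as $\{x_{k_j} = \pm 1\}$, and the normals to the sheets $W_{\Sigma_{k_j}}$ are either $\partial_{u_{k_j}}$ (for those in steady phase) or $\partial_t$ (for the at-most-one component in cap phase); these are manifestly linearly independent. The main obstacle is the combinatorial identification of the pairs $\Sigma^\pm$ in Step 1 and the coherent gluing of the tubular neighborhoods $N_\Sigma$ across normal crossings; both are handled by the cubical product structure automatically present there, but require careful bookkeeping.
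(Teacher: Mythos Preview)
The paper does not actually supply a proof of this proposition; the only justification is the sentence immediately preceding it (``It divides every edge into two components, each of which can be cancelled out with each other in a cobordism''). Your construction is a correct and explicit realization of exactly that cancellation idea: you identify $H'$ with the boundary of a normal-crossing tubular neighborhood of the original hypersurface $H$ and cap each pair $\Sigma^-,\Sigma^+$ with a half-circle bundle inside $M\times[0,1]$. So your approach is the paper's approach, carried out in detail rather than by fiat.

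Two small imprecisions are worth tightening. First, the assertion that $\Sigma^-$ and $\Sigma^+$ are \emph{distinct} components of $H'$ is not always true: when the normal bundle of $\Sigma$ in $M$ is nonorientable they form a single connected double cover. You clearly know this (you invoke the $\Z/2$-twist two lines later), so the earlier sentence should simply say that $\Sigma^-\cup\Sigma^+=\partial N_\Sigma$ without the distinctness claim; the half-circle cap $\{u^2+(2t)^2=1\}$ is invariant under $u\mapsto -u$ and hence is well-defined over the twisted $[-1,1]$-bundle regardless. Second, during the cap phase the normal to $W_{\Sigma_k}$ is $u_k\,\partial_{u_k}+4t\,\partial_t$, not $\partial_t$ alone; it equals $\partial_t$ only at the cap-top $u_k=0$. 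This does not harm your linear-independence check, since a vector of the form $\alpha\,\partial_{u_k}+\beta\,\partial_t$ is still independent from the steady-phase normals $\partial_{u_{j}}$ with $j\ne k$. What genuinely forces the staggering is that if two caps reach their tops at the same height $t$ then both normals are $\partial_t$ and transversality fails; staggering the top heights (or, as you do, the whole cap intervals) removes this, and for a compact $M$ there are only finitely many components of $H$ so this is unproblematic.
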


\section{Whitehead's neighborhood theorem}

Before we start, let us observe a basic proposition, going back to of Whitehead \cite{Whitehead}. Recall: In a polyhedral complex, an elementary collapse is the removal of a \Defn{free} face, that is, a face $A$ contained strictly only in one other face $B$. Both the free faces and the face $B$ are removed together, yielding a smaller cell complex with two faces less. Let us call the face $B$ \Defn{cofree}.

A collapse is the combination of elementary collapses. 

\begin{prp}[See also \cite{MST}]\label{prp:Whiteh}
Consider a collapse $X\searrow Y$ of polyhedral (resp.\ cubical) complexes of dimension~$d$ such that stars $\mr{St}_B X$ of cofree faces $B$ are of dimension $d$ and shellable relative to $Y \cap \mr{St}_B X$.

Then the second simplicial (resp.\ cubical) derived subdivision $X''$ of $X$ shells to the second derived subdivision of $Y''$.
\end{prp}

\begin{proof}
Consider first the case of simplicial derived subdivision: Define the \emph{neighborhood} $N_A X$ of a subcomplex $A$ in a polyhedral complex $X$ to be the closure of all faces of $X$ containing $A$.

Consider now the vertices of the first derived subdivision; as the original complex admits a collapse, the vertices come in pairs corresponding to those collapsing pairs.

Now, we consider the neighborhoods $(N_\gamma)_{\gamma=1,\cdots m}$ of these pairs in the second derived subdivsion $X''$ of $X$. 

The basic observation is that the neighborhood $N_\gamma$ of the barycenter of $A$ in $X''$ can be shelled relative to the complexes consisting of the remaining neighborhoods union $Y''$, that is, $N_\gamma$ is shellable relative to 
\[N_\gamma \cap \left(Y'' \cup \bigcup_{\gamma'>\gamma} N_{\gamma'} \right).\]
This is due to $\st_A X$, and therefore its second derived subdivision, being shellable. Therefore, the order of collapses gives a shelling of $X''$ to the induced subdivision of $Y$.

\emph{For the cubical case}, we note that the image under the confinement map has the desired shellability property, as it is simply the second simplicial derived subdivision. It is easy to see that the fibers of the confinement map at vertices individually are shellable, as they are cubical derived subdivisions of shellable stars, and derived subdivisions preserve shellability  by Lemma~\ref{lem:preserve}. Hence the second cubical derived subdivision has the desired shelling property as well.
\end{proof}

We close this section with another simple fact:

\begin{lem}\label{lem:starcollapse}
A star $\mr{St}_\tau C$ collapses to any of its vertices.
\end{lem}

We need an auxiliary lemma. 

\begin{lem}\label{lem:starcollapse2}
Consider a polytope $P$, and $F$ and face of its boundary. Then $P$ collapses to $\mr{St}_F \partial P$.
\end{lem}

\begin{proof}
Proposition~\ref{prp:BM} gives this in the context of shellings, but it is an easy and classical fact that a shelling induces a collapse.
\end{proof}

\begin{proof}[\textbf{Proof of Lemma~\ref{lem:starcollapse}}]
Consider the minimal face $T$ of $\mr{St}_\tau C$ containing $\tau$ and the vertex $v$ we wish to collapse to.

Consider now a maximal face $F$ of $\mr{St}_\tau C$, and assume $F$ is not $T$. Following Lemma~\ref{lem:starcollapse2}, $F$ collapses to $\mr{St}_\tau \partial F$ and hence
$\mr{St}_\tau C$ collapses down to $(\mr{St}_\tau C \setminus F) \cup \mr{St}_\tau \partial F $, the complex obtained by removing $F$ and all its faces from $\mr{St}_\tau C$, and adding $\mr{St}_\tau \partial F$.

Since 
\[\mr{St}_\tau\left( (\mr{St}_\tau C \setminus F) \cup \mr{St}_\tau \partial F\right)\ =\ (\mr{St}_\tau C \setminus F) \cup \mr{St}_\tau \partial F,\]
we can iterate this process until we collapsed $\mr{St}_\tau C$ to $T$. Now, we use Lemma~\ref{lem:starcollapse2} again and collapse the latter to $v$.
\end{proof}

\section{Cubical Pachner Theorem}

We now prove the main theorem. The directions $(1) \implies (2)$ and $(3) \implies (1)$ are shown in \cite{Funar}; it remains to prove $(2) \implies (3)$. Let $M$ be a manifold with two PL cubulations $X_0$ and $X_1$, and let $Y$ be a PL cubulation of $M \times [0,1]$ such that $Y \cap (M \times \{0\}) \cong X_0$, $Y \cap (M \times \{1\}) \cong X_1$.

We have the following lemma, following from the main result of \cite{AB-SSZ}.

\begin{lem}
Let $Y$ be a PL cubical manifold with boundary. After some iterated cubical stellar subdivison, the stars of faces in $Y$ are shellable, and are shellable relative to the intersection with $\partial Y$.
\end{lem}

\begin{proof}
This is really a simplicial lemma, as links of faces in cubical complexes are simplicial complexes, and cubical stellar subdivisions on the cubical complex result in simplicial stellar subdivisions in the links: We begin by the considering the maximal face $\tau$ whose link is not shellable. We then perform stellar subdivisions at those faces containing it: newly introduced faces clearly have links that are shellable, so it remains to see what happens in the link of $\tau$.

It therefore suffices to prove that a PL sphere or ball becomes shellable after iterated derived subdivision that does not affect the boundary. For spheres, this is a result of \cite{AB-SSZ}. For the case of a ball $B$, we observe that stellar subdivision in the interior face adjacent to a boundary face (in the sense that $F$ is an interior face and a facet of $F$ lies in the boundary), followed by a shelling step at the boundary, is isomorphic to a subdivision step at the boundary. Hence, restricting to subdivisions at interior faces does not pose a restriction.
\end{proof}

Next, we observe that the cobordism may be chosen so that $M \times [0,1]$ collapses to $M \times \{0\}$. Indeed, we have the following result following from the main result of \cite{1107.57891}:

\begin{lem} After some iterated cubical stellar subdivision of interior faces,  $Y$ collapses to $Y \cap (M \times \{0\})$.
\end{lem}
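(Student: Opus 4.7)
The plan is to reduce this statement to the simplicial collapsibility result that is the main theorem of \cite{1107.57891}, which asserts that after iterated derived subdivision of the interior, a PL cobordism of the form $N \times I$ collapses onto one of its boundary copies $N \times \{0\}$.

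The first step is to translate the cubical set-up into the simplicial language of that reference. As was exploited in the proof of the preceding lemma, the link of every face of a cubical complex is a simplicial complex, and a cubical stellar subdivision at an interior face of $Y$ induces simplicial stellar subdivisions on all links containing that face. Consequently, the inductive machinery of \cite{1107.57891}, which is driven by derived subdivisions in links and by local shellability arguments, transfers without essential change to the cubical setting.

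The second step is to realize the topological collapse combinatorially. By Proposition~\ref{prp:Whiteh}, a topological collapse lifts to a combinatorial collapse after two derived subdivisions provided the stars of the collapsed faces are shellable. The immediately preceding lemma guarantees precisely this shellability of stars of interior faces after iterated cubical stellar subdivision. Combining these two ingredients, and carrying out the collapse sequence on $Y$ in the order dictated by the topological deformation retraction of $M \times [0,1]$ onto $M \times \{0\}$, yields the desired combinatorial cubical collapse.

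The main obstacle, and the point that requires the most care, is keeping the subdivisions strictly in the interior of $Y$ so that the prescribed boundary cubulations $X_0$ and $X_1$ are preserved. As observed in the proof of the previous lemma, a cubical stellar subdivision at an interior face adjacent to the boundary, followed by a shelling step, has the same effect as a subdivision at a boundary face; this lets one simulate any required boundary subdivision by an interior subdivision composed with a collapse step, so that the restriction to interior subdivisions is not a genuine limitation on the transfer of the cited result.
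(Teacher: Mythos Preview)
Your proposal has a genuine gap: you never actually establish the combinatorial collapse. The sentence ``the inductive machinery of \cite{1107.57891} \dots\ transfers without essential change to the cubical setting'' is an assertion, not an argument. The result of \cite{1107.57891} is about simplicial (or more generally polyhedral) complexes and ordinary stellar/derived subdivisions; the fact that links in a cubical complex are simplicial does not let you run that argument verbatim on $Y$, because the global moves you are allowed to make are cubical stellar subdivisions, and those are \emph{not} the ordinary stellar subdivisions the cited machinery uses. You need a bridge between the two notions of subdivision, and you have not built one.

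You also misread Proposition~\ref{prp:Whiteh}. That proposition takes as \emph{input} a combinatorial collapse $X \searrow Y$ and produces a shelling after two derived subdivisions. It does not upgrade a topological deformation retraction to a combinatorial collapse, which is what you invoke it for. The passage from the topological fact ``$M\times[0,1]$ deformation retracts onto $M\times\{0\}$'' to a combinatorial collapse is precisely the hard content being imported from \cite{1107.57891}, so appealing to the deformation retraction at this stage is circular.

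The paper's route is different and supplies exactly the missing bridge. It first applies \cite{1107.57891} as a black box to obtain an \emph{ordinary} stellar subdivision $Y'$ of the interior that collapses to the bottom. It then proves a refinement lemma: any polyhedral subdivision of a cubical complex is refined by some iterated cubical stellar subdivision, leaving untouched any face already present as a subcomplex. This yields a cubical stellar subdivision $Y''$ refining $Y'$. A further citation of \cite{1107.57891} (collapsibility is inherited by subdivisions after an ordinary derived subdivision) gives a collapse of the ordinary derived subdivision of $Y''$; finally one checks that a collapse after one ordinary derived subdivision implies a collapse after one cubical derived subdivision. Your last paragraph on keeping subdivisions interior is fine, but the core of the argument---the refinement lemma and the ordinary-to-cubical transfer of the collapse---is absent from your proposal.
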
 

\begin{proof} Following \cite{1107.57891}, there exists a simplicial stellar subdivision $Y'$ of the interior faces of $Y$ so that $Y'$ collapses to $Y' \cap (M \times \{0\})$.

Now, to every ordinary stellar subdivision, we have an associated a cubical stellar subdivision.
 
Recall that this \emph{cubical} stellar subdivision $Y''$ of the interior faces of $M \times [0,1]$ admits a polyhedral map to $Y'$, that is, a map that sends polyhedra to polyhedra: This is the confiment map of Section~\ref{ssc:confinement}. We claim this map preserves collapses:

Recall that a collapse consists of a pair of faces $(A,B)$, such that $B$ is the only face that strictly contains $A$ (and $A$ is called free in this case)

Now, if $A$ is of positive dimension, then the preimage of both $A$ and $B$ are unique faces by Lemma~\ref{lem:confinement}, and the preimage of $A$ is hence free as well.

If $A$ is of dimension $0$, then it's preimage $\varphi^{-1} A$ is of the form $\mr{St}_\tau C$ that collapses to the intersection with the closure of $\varphi{-1} B$ by Lemma~\ref{lem:starcollapse}.

Hence $Y''$ collapses to $Y'' \cap (M \times \{0\})$.
\end{proof}

Hence, we have a cubulation $Y''$ of $M \times [0,1]$ which collapses to $Y'' \cap (M \times \{0\})$ and such that $Y'' \cap (M \times \{0\}) = Y'' \cap (M \times \{0\}) = X_0$, and in which we can assume that the star of any face is shellable, and shellable relative to the boundary as well.

Hence, using Proposition~\ref{prp:Whiteh}, after sufficiently many cubical derived subdivisions, we have a cubulation of $M \times [0,1]$ which shells to $M \times \{0\}$ and $M \times \{0\} = X_0$, $M \times \{1\} = X_1$. This gives the desired implication.
\qed

{\small
\bibliographystyle{myamsalpha}
\bibliography{Ref}}

\end{document}